\documentclass[notitlepage,english]{article}
\usepackage{amsmath,amssymb,amsfonts,amsthm,babel,graphicx}
\usepackage[T1]{fontenc}

\newtheorem{theorem}{Theorem}

\newtheorem{definition}{Definition}

\title{On the stratification by orbit types II}
\author{Julien Giacomoni \\
Institut de Math\'ematiques de Marseille - UMR7373 \\
39, rue F. Joliot Curie \\
13453 MARSEILLE Cedex 13 \\
julien.giacomoni@gmail.com}

\begin{document}

\maketitle




\begin{abstract}
When we have a proper action of a Lie group on a manifold, it is well known that we get a stratification by orbit types and it is known that this stratification satisfies the Whitney (b) condition. In a previous article we have seen that the stratification satisfies the strong Verdier condition. In this article we improve this result and obtain smooth local triviality. \\ \begin{center}MSC2010: 37C - 53C - 57R \end{center}
\end{abstract}

\begin{flushleft}

Smooth dynamical systems have been studied a lot and are still an active research domain. In particular, the study of symmetric dynamics, which are smooth dynamical systems that are symmetric (equivariant) with respect to a Lie group of transformation, may have great repercussions on mathematical physics and physics in general. The reader may consult "Dynamics and Symmetry" by M. Field (\cite{Fie}) and see the richness of this domain and the number of strong theoretical results that have been obtained. More fundamentally, proper actions of a Lie group on a manifold lead to nice examples of stratified spaces.


There are two levels of study when we consider a proper action of a Lie group on a manifold: stratification of the manifold itself and stratification of the quotient space called the orbit space. On both levels we get a stratification by orbit types with regularity conditions. The orbit space has been studied a lot and it is known that the stratification by orbit types is smoothly locally trivial (\cite{Sni}). This is stronger than the Whitney (b) condition or than the strong Verdier condition. Until now the only regularity condition for the stratification on the manifold that was given in the different references is the Whitney $(b)$ condition, which is a generic condition (in the sense that every algebraic variety or semi-algebraic set admits a Whitney stratification) and recently the strong Verdier stratification which is non-generic (\cite{Gia}). In this article we will see that the orbit type stratification on the manifold is smoothly locally trivial.

The new regularity obtained here is stronger than those previously obtained and may have nice repercussions. In a certain sense these results answer the question asked implicitely by Duistermaat and Kolk in \cite{DuKo} in (2.7.5): "We feel that the stratification by orbit types has even more special properties than general Whitney stratifications".

Independantly of this work, we can notice that recently the same result has been proposed by C.T.C. Wall in \cite{Wal}.

\section{Stratification by orbit types}

In this section we will recall the definitions and principal results about stratifications by orbit type. The classical references underlying what follows are \cite{Ko}, \cite{Pal}. We will follow mostly the notations of M. Pflaum in \cite{Pfl} which synthesizes work \cite{Bre}, \cite{DoS}, \cite{Fer}, \cite{Jan}, \cite{Les}, \cite{Sj}.

Let $\mathcal{M}$ be a manifold and $G$ a Lie group. 
\begin{definition}
A (left) action of $G$ is a smooth mapping (i.e. $\mathcal{C}^{\infty}$) $$\Phi : G \times \mathcal{M} \rightarrow \mathcal{M}, (g,x) \mapsto \Phi(g,x)=\Phi_{g}(x)=gx$$ such that: $$\forall g,h \in G, \forall x \in \mathcal{M}, \Phi_{g}(\Phi_{h}(x))=\Phi_{gh}(x),$$ and $\Phi_{e}(x)=x$, where $e$ is the unit element of $G$.\\
\end{definition}

\begin{definition}
A $G$-action $\Phi: G \times \mathcal{M} \rightarrow \mathcal{M}$ is called \emph{proper} if the mapping $\Phi_{ext}: G \times \mathcal{M} \rightarrow \mathcal{M} \times \mathcal{M}$, $(g,x) \mapsto (gx,x)$ is proper.
\end{definition}

With such proper actions several results are known, in particular $\mathcal{M}$ admits a $G$-invariant Riemannian metric. The most important result is the so called slice theorem (\cite{Ko}, \cite{Pal}). Here it is as stated in \cite{Pfl}:

\begin{theorem}
Let $\Phi: G \times \mathcal{M} \rightarrow \mathcal{M}$ be a proper group action, $x$ a point of $\mathcal{M}$ and $\mathcal{V}_{x}=T_{x}\mathcal{M}/T_{x}Gx$ the normal space to the orbit of $x$. Then there exists a $G$-equivariant diffeomorphism from a $G$-invariant neighborhood of the zero section of $G \times_{G_{x}} \mathcal{V}_{x}$ onto a $G$-invariant neighborhood of $Gx$ such that the zero section is mapped onto $Gx$ in a canonical way (where $G_{x}$ is the isotropy group of $x$).
\end{theorem}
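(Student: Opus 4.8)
The plan is to obtain the announced diffeomorphism as the Riemannian exponential map of a $G$-invariant metric, restricted to the normal bundle of the orbit; this is the classical tubular neighborhood theorem carried out $G$-equivariantly, and the real work is to keep everything $G$-invariant even though $G$ need not be compact. First I would fix a $G$-invariant Riemannian metric on $\mathcal{M}$ (whose existence for proper actions is recalled above) and record the two structural consequences of properness on which the construction rests: the isotropy group $G_x$ is compact, being identifiable with $\Phi_{ext}^{-1}(\{(x,x)\}) = G_x \times \{x\}$, which is compact since $\Phi_{ext}$ is proper; and the orbit $Gx$ is a closed embedded submanifold of $\mathcal{M}$, $G$-equivariantly diffeomorphic to $G/G_x$ via $gG_x \mapsto gx$.

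Because the metric is invariant, each $d\Phi_g$ restricts to an isometry $(T_yGx)^\perp \to (T_{gy}Gx)^\perp$; in particular $G_x$ acts orthogonally on $(T_xGx)^\perp \cong \mathcal{V}_x$ (the slice representation), and the map $G \times \mathcal{V}_x \to \nu(Gx)$, $(g,v) \mapsto (gx, d\Phi_g v)$ — where $v$ is read inside $(T_xGx)^\perp$ and $\nu(Gx) \subset T\mathcal{M}|_{Gx}$ denotes the geometric normal bundle — is constant on $G_x$-orbits and descends to a $G$-equivariant vector bundle isomorphism $G \times_{G_x}\mathcal{V}_x \xrightarrow{\ \sim\ } \nu(Gx)$ covering $G/G_x \cong Gx$ and carrying the zero section identically onto $Gx$. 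It therefore suffices to produce a $G$-equivariant diffeomorphism from a $G$-invariant neighborhood of the zero section of $\nu(Gx)$ onto a $G$-invariant neighborhood of $Gx$ that restricts to the identity on $Gx$.

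For this I would use the Riemannian exponential map $\exp$ of the invariant metric. Since isometries carry geodesics to geodesics, its domain may be taken $G$-invariant, and the restriction $E := \exp|_{\nu(Gx)}$ then satisfies $E \circ d\Phi_g = \Phi_g \circ E$ and fixes the zero section pointwise. The standard computation of the differential of $E$ along the zero section — using $T_{0_y}\nu(Gx) \cong T_yGx \oplus \nu_y(Gx)$, on which $dE$ acts as the identity in each summand — shows that $E$ is a local diffeomorphism at every point of the zero section. The locus where $E$ is a local diffeomorphism is open and, by equivariance of $E$, $G$-invariant; intersecting it with the single fiber $\nu_x$ gives a $G_x$-invariant open neighborhood of $0_x$, which by compactness of $G_x$ contains an $\varepsilon$-ball, and $G$-transitivity on $Gx$ then shows the tube $W = \{\xi \in \nu(Gx): \|\xi\| < \varepsilon\}$ of constant radius lies in that locus.

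It remains to shrink $\varepsilon$ so that $E|_W$ is injective, and this is the step where the non-compactness of $G$ (equivalently, of $Gx$) has to be dealt with — for compact $G$ it is the classical argument. If no $\varepsilon$ worked there would be $v_n \ne w_n$ in $\nu(Gx)$ with $\|v_n\|, \|w_n\| \to 0$ and $E(v_n) = E(w_n)$. Choosing $g_n \in G$ with $g_n \cdot \pi(v_n) = x$ (possible by transitivity on the orbit), one gets $d\Phi_{g_n}v_n \in \nu_x$ with norm $\to 0$, hence $d\Phi_{g_n}v_n \to 0_x$; and since the geodesic realizing $E$ on $d\Phi_{g_n}w_n$ has length $\|w_n\|$, the triangle inequality gives $d\big(g_n\!\cdot\!\pi(w_n),\, x\big) \le \|w_n\| + d\big(E(d\Phi_{g_n}v_n),\, x\big) \to 0$, so $d\Phi_{g_n}w_n \to 0_x$ as well. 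But $E$ is injective on a neighborhood of $0_x$, so for $n$ large $d\Phi_{g_n}v_n = d\Phi_{g_n}w_n$, whence $v_n = w_n$, a contradiction. Hence for small $\varepsilon$ the map $E|_W$ is a $G$-equivariant diffeomorphism onto the $G$-invariant open neighborhood $E(W)$ of $Gx$; precomposing with the isomorphism $G\times_{G_x}\mathcal{V}_x \cong \nu(Gx)$ of the second paragraph yields the asserted map, which sends the zero section $G\times_{G_x}\{0\} \cong G/G_x$ canonically onto $Gx$ by $gG_x \mapsto gx$.
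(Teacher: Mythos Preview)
The paper does not give its own proof of this statement: it is quoted as the classical slice theorem of Koszul and Palais, with references to \cite{Ko}, \cite{Pal}, and the formulation in \cite{Pfl}, and is then used as a black box in the proof of Theorem~3. So there is nothing in the paper to compare your argument against.

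That said, your proof is a correct rendition of the standard Riemannian approach: a $G$-invariant metric (available by properness) makes the normal exponential map $G$-equivariant; compactness of $G_x$ gives a uniform tube radius over the orbit on which $E$ is a local diffeomorphism; and your contradiction argument for global injectivity, in which you translate one of the two offending normal vectors back to the fiber over $x$ and then use the triangle inequality to drag the other one along, is exactly how one handles the non-compactness of $Gx$. The identification $G\times_{G_x}\mathcal{V}_x \cong \nu(Gx)$ via $(g,v)\mapsto d\Phi_g\,v$ and the canonical image of the zero section are also correctly argued.
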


If we denote by $\mathcal{M}_{(H)}$ the set $\{x\ \in \mathcal{M} | G_{x} \sim H\}$ where $\sim$ means "conjugate to", we get in particular that for a compact subgroup $H$ of $G$ each connected component of $\mathcal{M}_{(H)}$ is a submanifold of $\mathcal{M}$. The isotropy subgroups $G_{x}$ are compact in the case of a proper group action. Assigning to each point $x \in \mathcal{M}$ the germ $\mathcal{S}_{x}$ of the set $\mathcal{M}_{(G_{x})}$ we get a stratification of $\mathcal{M}$ in the sense of Mather (\cite{Mat}), called stratification by orbit type.

This stratification has been studied a lot and has been also recently described in \cite{DuKo}, \cite{Fie}. This stratification was known to be Whitney $(b)$ regular and is known to satisfy the strong Verdier condition (\cite{Gia}):

\begin{definition}
Let $X$ be a $\mathcal{C}^{1}$ submanifold of $\mathbb{R}^{n}$. Let $Y$ be a submanifold of $\mathbb{R}^{n}$ such that $0 \in Y \subset \overline{X} \setminus X$. In \cite{LKT} (see also \cite{LTrW}) Kuo, Li, Trotman and Wilson define $X$ to be strongly Verdier regular over $Y$ (or differentiably regular) at $0$ if for all $\epsilon>0$ there is a neighborhood $U$ of $0$ in $\mathbb{R}^{n}$ such that if $x \in U \cap X$ and $y \in U \cap Y$, then $$d(T_{y}Y,T_{x}X)\leqslant \epsilon|x-y|.$$
\end{definition}

\begin{theorem}
The stratification by orbit types of a $G$-manifold $\mathcal{M}$ with a proper action is a strong Verdier stratification.
\end{theorem}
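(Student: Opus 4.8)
The plan is to exploit the locality of the strong Verdier condition together with the slice theorem. Fix a point $p$ lying in a stratum $Y=\mathcal{M}_{(H)}$ (so that, up to conjugacy, $G_{p}=H$, with $H$ compact) and a stratum $X=\mathcal{M}_{(K)}$ with $Y\subseteq\overline{X}\setminus X$; we must check that $X$ is strongly Verdier regular over $Y$ at $p$ (in a chart around $p$). By the slice theorem, a $G$-invariant neighbourhood of $Gp$ is $G$-equivariantly diffeomorphic to a neighbourhood of the zero section in $G\times_{G_{p}}\mathcal{V}_{p}$, where $H=G_{p}$ acts orthogonally on $V:=\mathcal{V}_{p}$ (orthogonality is available since $H$ is compact). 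Under this identification $Y$ corresponds to $G\times_{H}V^{H}$ and $X$ to $G\times_{H}V_{(K)}$. Passing to a local trivialisation of the bundle $G\times_{H}V\to G/H$ near $eH$, the problem becomes the study of the orbit type stratification of the linear $H$-representation $V$, multiplied by a smooth factor coming from the base $G/H$; that smooth factor is harmless for the Verdier estimate, since on a product $U\times(\text{stratum})$ over $U\times\{0\}$ the inequality for the product follows from the inequality for the second factor.

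The second step is the model computation. In the linear representation $V$ the stratum through $0$ is the fixed subspace $V^{H}=V_{(H)}$, while $X$ corresponds to $V_{(K)}$. Two features are crucial. First, $V_{(K)}$ is scale-invariant: $H_{tv}=H_{v}$ for $t\neq 0$ because the action is linear, so $v\in V_{(K)}\iff tv\in V_{(K)}$, and $V_{(K)}$ is a punctured cone with $0$ in its closure. Second, for every $v\in V_{(K)}$ and every $w\in V^{H}$ one has $h(v+w)=hv+w$ for all $h\in H$, whence $v+\mathbb{R}w\subseteq V_{(K)}$ and therefore $V^{H}\subseteq T_{v}V_{(K)}$. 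Consequently $T_{y}Y=V^{H}\subseteq T_{x}X$ for all admissible $x,y$, so $d(T_{y}Y,T_{x}X)$ vanishes identically: the model satisfies the strong Verdier condition with $\epsilon=0$ (and Whitney $(a)$ for free).

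The delicate point, and the one I expect to be the main obstacle, is transferring this from the linear model to $\mathcal{M}$: the diffeomorphism $\Psi$ furnished by the slice theorem (composed with the bundle trivialisation and a chart) is only $C^{\infty}$, not linear, and the strong Verdier condition is not invariant under arbitrary diffeomorphisms — a generic $C^{\infty}$ change of coordinates turns the identically-zero model estimate into a bound of the shape $d(T_{y}Y,T_{x}X)\leqslant C|x-y|$ with $C$ a fixed constant coming from $D^{2}\Psi$, which is far from the required $\epsilon|x-y|$ with $\epsilon$ arbitrarily small. What rescues the argument is precisely the conical nature of the model strata. I would argue by contradiction: assuming strong Verdier fails at $p$, pick $\epsilon_{0}>0$ and sequences $x_{n}\in X$, $y_{n}\in Y$ with $x_{n},y_{n}\to p$ and $d(T_{y_{n}}Y,T_{x_{n}}X)>\epsilon_{0}|x_{n}-y_{n}|$, then blow up at $p$ by rescaling with $r_{n}:=\max(|x_{n}-p|,|y_{n}-p|)$. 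Because $\big(\Psi(r_{n}u)-p\big)/r_{n}\to D\Psi_{p}(u)$ uniformly on compact sets and the model strata are invariant under rescaling, the rescaled strata of $\mathcal{M}$ converge in $C^{1}$, together with their Gauss maps, to the linear sets $D\Psi_{p}(V^{H})$ and $D\Psi_{p}(V_{(K)})$; on these the nesting of tangent spaces from Step 2 persists, so $d(T_{y_{n}}Y,T_{x_{n}}X)/|x_{n}-y_{n}|\to 0$, contradicting $\epsilon_{0}>0$.

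Everything else is routine: the frontier condition, the identification of the orbit type strata inside $G\times_{H}V$ (the isotropy of $[g,v]$ is $gH_{v}g^{-1}$), the reduction of $G$-conjugacy to $H$-conjugacy of subgroups of $H$ in a neighbourhood of $p$, and the verification that $Y\subseteq\overline{X}\setminus X$ in the model (again the cone structure: $V^{H}+V_{(K)}\subseteq V_{(K)}$ and $0\in\overline{V_{(K)}}$) are all contained in the slice theorem and the standard description of the stratification recalled above. The care needed is concentrated in Steps 1 and 3: splitting off the smooth base factor cleanly, and justifying the $C^{1}$-convergence of the rescaled strata in the blow-up.
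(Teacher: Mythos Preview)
The paper does not contain a proof of this theorem: it is quoted from the author's earlier article \cite{Gia}, and the present paper instead proves the stronger Theorem~3 (smooth local triviality). So there is no ``paper's own proof'' to compare with beyond the observation that the slice--theorem chart already exhibits the strata in the product form $(G/H\times\{0\}\times\mathcal{V}^{H})$ and $(G\times_{H}\mathcal{W}_{(K)})\times\mathcal{V}^{H}$; in that chart your Step~2 computation $T_{y}Y\subseteq T_{x}X$ holds on the nose and gives $d\equiv 0$.

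Your Steps~1 and~2 are correct and are exactly this reduction.

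Your Step~3, however, has a genuine gap. You rightly note that the strong Verdier inequality is not invariant under smooth changes of chart, but the blow--up argument you propose does not repair this. $C^{1}$--convergence of the rescaled strata to their linear models gives $d(T_{y_{n}}Y,T_{x_{n}}X)\to 0$; it does \emph{not} give $d(T_{y_{n}}Y,T_{x_{n}}X)/|x_{n}-y_{n}|\to 0$. After rescaling by $r_{n}$ the derivative $D\Psi_{n}$ converges to $D\Psi_{p}$ only at rate $O(r_{n})$, so the estimate you obtain is $d\leqslant C\,r_{n}\,|\tilde x_{n}-\tilde y_{n}|=C\,|x_{n}-y_{n}|$, i.e.\ ordinary Verdier, not strong Verdier. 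A concrete instance: with $Y=\{(t,0,0)\}$ and $X=\{(t,s,0):s\neq 0\}$ in $\mathbb{R}^{3}$ (so $d\equiv 0$), the diffeomorphism $\Psi(t,s,u)=(t,s,u+ts)$ fixes $Y$ and sends $X$ to $\{(t,s,ts):s\neq 0\}$; taking $y_{n}=(s_{n},0,0)$ and $x_{n}=(s_{n},s_{n},s_{n}^{2})$ one computes $d(T_{y_{n}}Y,T_{x_{n}}\Psi(X))\sim s_{n}$ and $|x_{n}-y_{n}|\sim s_{n}$, so the ratio tends to $1$, not $0$. Your blow--up hypotheses (conical model strata, $C^{1}$ convergence of rescalings) are all satisfied here, yet strong Verdier fails after $\Psi$.

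The upshot is that Step~3 is both flawed and unnecessary: the slice--theorem diffeomorphism \emph{is} a chart on $\mathcal{M}$, and the strong Verdier condition is to be checked in some chart. In the slice chart your Step~2 finishes the proof with $d\equiv 0$. If instead you insist on an arbitrary ambient chart, the statement as written is simply not chart--independent, and no amount of blowing up will make it so.
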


\section{Smooth local triviality}

Let us look at the definition of the strongest condition that we may expect for a stratification.

\begin{definition}
A stratified space $X$ is called smoothly locally trivial if for every $x \in X$ there exists a neighborhood $\mathcal{U}$, a stratified space $F$ with stratification $\mathcal{S}^{F}$, a distinguished point $o \in F$ and a smooth isomorphism of stratified spaces $h:\mathcal{U} \rightarrow (\mathcal{S}\cap \mathcal{U}) \times F$ such that $h^{-1}(y,o)=y ,\forall y \in \mathcal{S} \cap \mathcal{U}$ and such that $\mathcal{S}^{F}_{o}$ is the germ of the set ${o}$. Here, $\mathcal{S}$ is the stratum of $X$ with $x \in S$.
\end{definition}

\section{Theorem}
\begin{theorem}
The stratification by orbit types of a $G$-manifold $\mathcal{M}$ with a proper action is smoothly locally trivial.
\end{theorem}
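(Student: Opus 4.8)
Plan for proving the theorem.

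The natural route is via the slice theorem, which reduces the problem to a local model, combined with an induction on the dimension of the manifold (or equivalently on the "depth" of the orbit-type stratification). The plan is as follows. Fix a point $x \in \mathcal{M}$ and let $H = G_x$, a compact subgroup. By the slice theorem, a $G$-invariant neighborhood of the orbit $Gx$ is $G$-equivariantly diffeomorphic to a $G$-invariant neighborhood of the zero section in the associated bundle $G \times_H \mathcal{V}_x$, where $\mathcal{V}_x = T_x\mathcal{M}/T_xGx$ carries a linear $H$-action (obtained from the slice representation; using a $G$-invariant metric one may take an honest $H$-invariant complement, so $\mathcal{V}_x$ is a genuine orthogonal $H$-representation). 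The orbit-type stratification of $\mathcal{M}$ near $Gx$ corresponds, under this diffeomorphism, to the orbit-type stratification of $G \times_H \mathcal{V}_x$, which in turn is the one induced fiberwise from the orbit-type stratification of the $H$-representation $\mathcal{V}_x$. So it suffices to produce a smooth stratified trivialization of $G\times_H \mathcal{V}_x$ along the stratum through the image of $x$ — and that stratum is (locally) $G\times_H \mathcal{V}_x^H$, the fixed-point subbundle, since $x$ has isotropy exactly $H$ on its own stratum.

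The second step is to trivialize the representation $\mathcal{V}_x$ as an $H$-space. Write $\mathcal{V}_x = \mathcal{V}_x^H \oplus W$ where $W$ is the $H$-invariant orthogonal complement; $W$ contains no nonzero $H$-fixed vector, so the orbit-type stratum of $0$ inside $W$ is just $\{0\}$. The desired stratified fiber $F$ is $W$ with its orbit-type stratification and distinguished point $o = 0$; the stratum condition $\mathcal{S}^F_o = $ germ of $\{o\}$ holds precisely because $W^H = 0$. Now over the base stratum $\mathcal{V}_x^H$ (near $0$) the product decomposition $\mathcal{V}_x^H \times W$ is already an $H$-equivariant smooth trivialization — translation in the $\mathcal{V}_x^H$ directions is $H$-equivariant and preserves each orbit-type stratum of $\mathcal{V}_x$ because those strata are products $(\text{open in } \mathcal{V}_x^H)\times(\text{stratum of } W)$. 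Pushing this up through $G\times_H(-)$ gives a $G$-equivariant smooth stratified isomorphism from a neighborhood $\mathcal{U}$ of the image of $x$ onto $\big(G\times_H \mathcal{V}_x^H\big)\times W = (\mathcal{S}\cap\mathcal{U})\times F$, sending $y\mapsto(y,o)$ on the base stratum, which is exactly the required $h^{-1}$. One should remark that $G\times_H\mathcal{V}_x^H$ is indeed (an open piece of) the stratum $\mathcal{S}$ through $x$: points of the form $[g, v]$ with $v \in \mathcal{V}_x^H$ have isotropy conjugate to $H$, and conversely near the zero section every point of $\mathcal{M}_{(H)}$ arises this way.

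The main obstacle — and the place requiring genuine care rather than routine verification — is the compatibility of the associated-bundle construction $G\times_H(-)$ with the stratification: one must check that the local triviality of the $H$-space $\mathcal{V}_x$ along $\mathcal{V}_x^H$ is inherited by the bundle, i.e. that a local section of $G\to G/H$ combined with the fiberwise trivialization yields a genuinely smooth (not merely continuous or piecewise) isomorphism of stratified spaces, and that the transition over overlaps of such local sections respects strata. This is where the linearity of the slice representation is essential: because the $H$-action on $\mathcal{V}_x$ is linear, the splitting $\mathcal{V}_x = \mathcal{V}_x^H\oplus W$ and the trivialization are $H$-equivariant and smooth, so they descend. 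A secondary point to address is globalizing from a neighborhood of $x$ in $\mathcal{M}$ versus a neighborhood of the whole orbit $Gx$; since we only need local triviality at $x$, restricting the $G$-invariant model neighborhood to a slice around $x$ suffices, but one should note that the resulting $\mathcal{U}$, $F$, $h$ genuinely satisfy the definition, including that $h$ is smooth as a map of stratified spaces (smooth on each stratum, with all compatibilities). The remaining verifications — smoothness of $h$ and $h^{-1}$, that strata map to strata, the normalization $h^{-1}(y,o)=y$ — are then direct consequences of the constructions above.
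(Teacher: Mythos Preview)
Your approach is essentially the same as the paper's: apply the slice theorem, split the slice representation as $\mathcal{V}_x=\mathcal{V}_x^{H}\oplus W$ using an $H$-invariant inner product, observe that each orbit-type stratum near $x$ is a product with the $\mathcal{V}_x^{H}$-factor, and read off the trivialization. The paper carries out exactly these steps, writing $\mathcal{M}=(G\times_H\mathcal{W})\times\mathcal{V}^{H}$ and $\mathcal{M}_{(K)}=(G\times_H\mathcal{W}_{(K)})\times\mathcal{V}^{H}$.

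One point worth flagging: your sentence claiming a \emph{$G$-equivariant} isomorphism onto $(G\times_H\mathcal{V}_x^{H})\times W$ is not literally correct --- the assignment $[g,(v,w)]\mapsto([g,v],w)$ is not well-defined because $H$ acts nontrivially on $W$, and there is no global $G$-equivariant splitting of $G\times_H\mathcal{V}_x$ as that product. You evidently recognize this, since your following paragraph identifies precisely this issue as the ``main obstacle'' and fixes it via a local section $s:U'\to G$ of $G\to G/H$; after shrinking $\mathcal{U}$ so that it lies over $U'$, the bundle trivializes and your product description becomes honest (at the cost of $G$-equivariance, which the definition of smooth local triviality does not require). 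With that correction your argument is complete. Incidentally, your choice of fiber $F=W$ with $o=0$ is cleaner than the paper's $F=G\times_H\mathcal{W}$ with $o=[e,0]$, since in your $F$ the stratum through $o$ really is the germ of $\{o\}$ (because $W^{H}=0$), whereas in the paper's $F$ the stratum through $[e,0]$ is all of $G/H\times\{0\}$.
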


\begin{proof}

Let us begin with two subgroups of G.\\

Suppose that $K \subsetneq H \subset G$ are two isotropy groups of $\mathcal{M}$, so that we have $\mathcal{M}_{(H)}<\mathcal{M}_{(K)}$. Let $y \in \mathcal{M}_{(H)}$. By the slice theorem, we can suppose that:
$$\mathcal{M}=G \times_{H} \mathcal{V}=(G \times_{H} \mathcal{W}) \times \mathcal{V}^{H}$$ and $y=[(e,0)]$ where $\mathcal{V}$ is an $H$-slice, $\mathcal{V}^{H}$ is the subspace of the $H$-invariant vectors, and $\mathcal{W}=(\mathcal{V}^{H})^{\perp}$ is the orthogonal space relative to the $H$-invariant inner product on $\mathcal{V}$.\\

We have (\cite{Pfl} page 159): $$\mathcal{M}_{(K)}=(G \times_{H} \mathcal{W}_{(K)}) \times \mathcal{V}^{H}$$ and $$\mathcal{M}_{(H)}=G/H \times \{0\} \times \mathcal{V}^{H}$$.\\

Let us consider $H \subset G$ an isotropy group of $\mathcal{M}$ and let $y \in \mathcal{M}_{(H)}$. Possibly restricting the open set $\mathcal{U}$ around $y$ we have a finite number of strata to consider, the strata such that $\mathcal{M}_{(H)}$ is in their boundaries, using that we we have a Whitney stratification (\cite{Pfl}). If we look at the previous considerations, they are independant of the subgroup $K$ describing the strata around $y \in \mathcal{M}_{(H)}$. So, we have a $G$-equivariant $\mathcal{C}^{\infty}$-diffeomorphism $\phi$ in Koszul's structural theorem (Theorem 1) transforming locally the stratified space $\mathcal{M}$: $$ \phi: \mathcal{U} \rightarrow G \times_{G_{y}}\mathcal{V}_{y}.$$ With the previous notations: for $x \in \mathcal{M}_{(H)}$, we have:
$$\tilde{\phi}: \mathcal{U} \rightarrow \mathcal{U} \cap (G/H \times \mathcal{V}^{H}) \times \cup_{(H)\leqslant(K)}G \times_{H}\mathcal{W}_{(K)}$$ with $\tilde{\phi}([g,v])=((gH,v^{H}),[g,w])$ (where $v=(v^{H},w)$) which is a smooth isomorphism of stratified spaces such that: $$\tilde{\phi}^{-1}(h,o)=h, \forall h \in \mathcal{U} \cap (G/H \times \mathcal{V}^{H})$$ where $$F=\cup_{(H)\leqslant(K)}G \times_{H}\mathcal{W}_{(K)}$$ and such that $\mathcal{S}^{F}_{o}$ is the germ of ${o}$ where $o$ is $[e,0]$. This shows the result.
\end{proof}

I would like to thank David Trotman for his precious encouragement.

\end{flushleft}


\begin{thebibliography}{99}

\bibitem{Bre}
G. E. Bredon, \emph{Introduction to Compact Transformation Groups}, Academic Press, New York, 1972.

\bibitem{DoS}
K. Dovermann and R. Schultz, \emph{Equivariant surgery theories and their periodicity properties}, Lecture Notes in Mathematics,  vol. 1443, Springer-Verlag, 1990.

\bibitem{DuKo}
J. J. Duistermaat and J. A. C. Kolk, \emph{Lie groups}, Springer-Verlag, Heidelberg, 2000.

\bibitem{Fer}
M. Ferrarotti, \emph{$G$-manifolds and stratifications}, Rend. Ist. Mat. Univ. Trieste, 26, 1994, 211-232.

\bibitem{Fie}
M. J. Field, \emph{Dynamics and symmetry}, ICP Advanced Texts in Mathematics-Vol 3, Imperial College Press, 2007.

\bibitem{Gia}
J. Giacomoni, \emph{On the stratification by orbit types}, Bull. London Math. Soc. (2014) 46 (6): 1167-1170. doi: 10.1112/blms/bdu070

\bibitem{Jan}
K. J\"anich, \emph{Differenzierbare $G$-Mannigfaltigkeiten}, Lecture Notes in Mathematics, vol. 59, Springer-Verlag, Berlin, Heidelberg, New York, 1968.

\bibitem{Ko}
J.L. Koszul, \emph{Sur certains groupes de transformation de Lie}, Colloque de G\'eom\'etrie diff\'erentielle, Colloques du CNRS, 1953, 137-141.

\bibitem{LKT}
T. C. Kuo, P. X. Li and D. J. A. Trotman, \emph{Blowing-up and Whitney (a)-regularity}, Canadian Math. Bull. Vol 32 (4),1989, 482-485.

\bibitem{Les}
M. Lesch, \emph{Die Struktur von Orbitr\"aumen}, Vortragsausarbeitungen eines Vortrags bei der Max-Planck-Arbeitsgruppe Potsdam, September 1992.

\bibitem{Mat}
J. N. Mather, \emph{Stratifications and mappings}, Dynamical Systems (M. M. Peixoto, ed.), Academic Press, 1973, 195-232.

\bibitem{Pal}
R. S. Palais, \emph{On the existence of slices for actions of non-compact Lie groups}, Ann. of Math. 73, 1961, 295-323.

\bibitem{Pfl}
M. Pflaum, \emph{Analytic and geometric study of stratified spaces}, Lecture Notes in Mathematics, 1768, Springer-Verlag, Berlin, 2001.

\bibitem{Sj}
R. Sjamaar, \emph{Singular Orbit Spaces in Riemannian and symplectic Geometry}, Ph.D. thesis, Rijksuniversiteit te Utrecht, 1990.

\bibitem{Sni}
J. \'{S}niatycki, \emph{Differential Geometry of Singular Spaces and Reduction of Symmetry}, New mathematical monographs, Cambridge University Press, 2013.

\bibitem{LTrW}
D. J. A. Trotman and L. C. Wilson, \emph{Stratifications and finite determinacy}, Proc. London Math. Soc. (3), 1999, 334-368.

\bibitem{Ver}
J.-L. Verdier, \emph{Stratifications de Whitney et th\'eor\`eme de Bertini-Sard}, Invent. Math. 36, 1976, 295-312.

\bibitem{Wal}
C. T. C. Wall, \emph{Differential Topology}, Cambridge University Press, 2016.


\end{thebibliography}
\end{document}